\theoremstyle{plain}
   \newtheorem {thm}{Theorem}[section]
   \newtheorem {cor}[thm]{Corollary}
\theoremstyle{definition}
   \newtheorem{rmk}[thm]{Remark}
\numberwithin{equation}{section}
\newcommand{\hgs}[6]{ {}_{#1}\phi_{#2} \left[ \genfrac{}{}{0pt}{}{#3}{#4} ;
{#5},{#6} \right]}
\newcommand{\legendre}[2]{\genfrac{(}{)}{0.5pt}{}{#1}{#2}}
\newcommand{\gp}[2]{\genfrac{[}{]}{0pt}{}{#1}{#2}}
\newenvironment{oldresult}[1]
  {\textbf{#1.} \itshape } 
\title
{On Series Expansions of\\Capparelli's Infinite Product}
\author{Andrew V. Sills\\
Department of Mathematics,\\
   Rutgers University, Piscataway, NJ, USA 08854-8019\\
\texttt{http://www.math.rutgers.edu/\~{}asills}\\
\texttt{asills@math.rutgers.edu}}
\date{September 9, 2003.}
\begin{document}
\maketitle

\begin{abstract} 
Using Lie theory, Stefano Capparelli conjectured an interesting 
Rogers-Ramanujan type partition identity in his 1988 Rutgers Ph.D.
thesis.  The first proof was given by George Andrews, using combinatorial
methods.  Later, Capparelli was able to provide a Lie theoretic proof.

 Most combinatorial Rogers-Ramanujan type identities (e.g. the
G\"ollnitz-Gordon identities, Gordon's combinatorial generalization
of the Rogers-Ramanujan identities, etc.) have an analytic counterpart.
The main purpose of this paper is to provide two new series representations
for the infinite product associated with
Capparelli's conjecture.  Some additional related identities, including 
new infinite families are also presented.  
\end{abstract}

\section{Introduction}
In 1894, L.J. Rogers was the first to discover a pair of
series--product identities 
which are now known
as the Rogers-Ramanujan identities.  They may be stated compactly as
follows:

\begin{oldresult}{Rogers-Ramanujan Identities--Analytic Form}
For $\lambda=0$ or $1$, 
\begin{equation}
 \sum_{j=0}^\infty \frac{q^{j^2+\lambda j}}{(q)_j} =
 \frac{1}{(q^{\lambda+1},q^{4-\lambda};q^5)_\infty},
\end{equation} 
where 
 \[ (A)_0 := (A;q)_0 := 1, \]
 \[ (A)_n := (A;q)_n := (1-A)(1-Aq) \cdots (1-Aq^{n-1}), \]
 \[ (A)_\infty:= (A;q)_\infty := \prod_{i=0}^\infty (1-Aq^i), \]
 and \[ (A_1, A_2, \dots, A_r;q)_s = (A_1;q)_s (A_2;q)_s \cdots
   (A_r,q)_s. \] 
\end{oldresult}
(Although the results in this paper may be considered purely from the
point of view of formal power series, they also yield identities
of analytic functions provided $|q|<1$.)

A \emph{partition} $\pi$ of an integer $n$ is a nonincreasing finite sequence
of positive integers $(\pi_1, \pi_2, \dots, \pi_s)$ 
such that $\sum_{i=1}^s \pi_i = n$.  The
$\pi_i$'s are called the \emph{parts} of the partition $\pi$.    

MacMahon~\cite{MacMahon} and Schur~\cite{Schur}
independently saw that the Rogers-Ramanujan identities
were in fact equivalent to the following partition theoretic statement: 

\begin{oldresult}{Rogers-Ramanujan Identities--Combinatorial Form}
Let $R_1(\lambda,n)$ denote the number of partitions 
$\pi=(\pi_1,\dots,\pi_s)$ of $n$
into parts wherein $\pi_s >\lambda$ and $\pi_{i} - \pi_{i+1} \geqq 2$.  
Let $R_2(\lambda,n)$  
denote the number of partitions of $n$ wherein all parts are congruent to
$\pm(\lambda+1)$ modulo $5$.  Then for all integers $n$ and for $\lambda=0$
or $1$,
$R_1(\lambda,n) = R_2(\lambda,n)$.
\end{oldresult} 

Over the years, many other analytic and combinatorial identities of
Rogers-Ramanujan type were discovered, including the following
analytic identity of Slater~\cite[p. 155, equations (36) and (34)]{Slater} 
and its combinatorial
counterpart due to G\"ollnitz~\cite{Gollnitz}, and rediscovered by 
Gordon~\cite{Gordon}:

\begin{oldresult}{Slater's mod 8 Identities}
For $\lambda=0$ or $1$, 
 \begin{equation}
   \sum_{j=0}^\infty \frac{ q^{j^2 + 2\lambda j} (-q;q^2)_j}{ (q^2;q^2)_j }
  = \frac{1}{(q^{1+2\lambda},q^4, q^{7-2\lambda}; q^8)_\infty}.
 \end{equation}
\end{oldresult}

\begin{oldresult}{The G\"ollnitz-Gordon Partition Identities}
Let $G_1(\lambda,n)$ denote the number of partitions $\pi=(\pi_1,\dots,\pi_s)$ of $n$ wherein 
$\pi_s > 2\lambda$, $\pi_{i} - \pi_{i-1} \geqq 2$, 
and $\pi_{i} - \pi_{i+1} > 2$ if $\pi_{i}$ or $\pi_{i+1}$ is even.
Let $G_2(\lambda,n)$ denote the number of partitions of $n$ into parts congruent
to $\pm (1+2\lambda)$ or $4$ modulo 8.  Then $G_1(\lambda,n) = 
G_2(\lambda,n)$ for all
integers $n$ and $\lambda=0$ or $1$.
\end{oldresult}

  Following a program of research initiated by Lepowsky-Milne~(\cite{LM1},
\cite{LM2}),
and Lepowsky-Wilson~(\cite{LW1},\cite{LW2},\cite{LW3},\cite{LW4},\cite{LW5}), 
Stefano Capparelli was able to conjecture a
partition identity as a result of his studies of the standard level 3
modules associated with the Lie Algebra $A_2^{(2)}$, and included
this conjecture in his Ph.D. thesis~\cite{sc-thesis}:

\begin{oldresult}{Capparelli's Conjecture}
Let $C_1(n)$ denote the number of partitions
$\pi=(\pi_1,\dots,\pi_s)$ of $n$ wherein
$\pi_s > 1$, $\pi_{i} - \pi_{i+1} \geqq 2$, and if
 $\pi_{i} - \pi_{i+1}< 4$, then either $\pi_{i}$ and $\pi_{i+1}$
 are are both multiples of three, or  $\pi_{i}\equiv 1\pmod{3}$ and
$\pi_{i+1}\equiv-1\pmod{3}$.  Let $C_2(n)$ denote the 
number of partitions of $n$ into parts congruent to $\pm 2$ or $\pm 3$
modulo $12$.  Then $C_1(n) = C_2(n)$ for all integers $n$.
\end{oldresult}
George Andrews, inspired by the combinatorial techniques of 
Wilf and Zeilberger~\cite{a=b}, provided the 
first proof in~\cite{gea}.  Later, Lie-theoretic proofs were supplied by
Tamba and Xie~\cite{tx} and by Capparelli himself~\cite{sc}.
In~\cite{mp}, Meurman and Primc embed Capparelli's conjecture in 
an infinite family of three-color partition identities. 
 
  In~\cite{aag}, Alladi, Andrews, and Gordon provided refinements to
Capparelli's conjecture along with a corresponding identity of
generating functions.  By replacing $q$ with $q^3$, and setting
$a=q^{-2}$, $b=q^{-4}$ and $c=1$ in~\cite[p. 648--9, Lemma 2(b)]{aag},
one can deduce the following analytic counterpart to Capparelli's
conjecture:
\begin{gather}\label{Cap0}
\sum_{i,j,k\geqq 0} \frac{q^{3i^2 + i + 3j^2 - j + \frac 32 k^2 + \frac 32 k
  +3ik + 3jk} (-q^3,q^3)_{i+j} }{(q^6;q^6)_i (q^6;q^6)_j (q^3;q^3)_k} 
= \frac{1}{(q^2,q^3,q^9,q^{10}; q^{12})_\infty}.
\end{gather}
 The main goal of
this paper is to present two additional analytic identities
involving the infinite product $(q^2,q^3,q^9,q^{10}; q^{12})_\infty^{-1}$,
namely 
\begin{gather}\label{Cap1}
\sum_{n=0}^\infty \sum_{j=0}^{2n} 
  \frac{q^{n^2} \legendre{n-j+1}{3}}{(q)_{2n-j} (q)_{j}}
  = \frac{1}{(q^2,q^3,q^9,q^{10}; q^{12})_\infty},
\end{gather}
where $\legendre{n}{p}$ is the Legendre symbol, and
\begin{gather} 
1+ \underset{(n,j,r)\neq(0,0,0)}{\sum_{n,j,r\geqq 0}}
 \frac{q^{3n^2 + \frac 92 r^2 + 3j^2 + 6nj + 6nr + 6rj
 -\frac 52r - j} (q^3;q^3)_{2j+r-1} (1+q^{2r+2j}) (1-q^{6r+6j})}
{(q^3;q^3)_n (q^3;q^3)_r (q^3;q^3)_j (-1;q^3)_{j+1} (q^3;q^3)_{n+2r+2j}} 
\nonumber\\ \label{Cap2}
= \frac{1}{(q^2,q^3,q^9,q^{10}; q^{12})_\infty},
\end{gather}
which will actually arise as a corollary to the following
analytic identity, an ``$a$-generalization of an analytic
counterpart of Capparelli's conjecture":

\begin{gather}
\sum_{n=0}^\infty \sum_{r=0}^\infty \sum_{j=0}^\infty
 \frac{a^{3n+2r+2j} 
   q^{3n^2 + \frac 92r^2 + 3j^2 + 6nj + 6nr + 6rj- 
 \frac 52 r - j} 
 (a^3;q^3)_{2j+r} (1+ (aq)^{2r+2j}) (1-a^3 q^{6j+6r})}
{2 (q^3;q^3)_n (q^3;q^3)_r  (q^3;q^3)_j (-a^3 q^3;q^3)_j 
(a^3,q^3)_{n+2j+2r+1}} \nonumber \\  
 =\frac{1}{(a^3 q^3;q^3)_\infty} 
 \sum_{r=0}^\infty
\frac{a^{3r} q^{3r^2} (a^3;q^3)_r (-q^3;q^3)_{r-1} (1-a^3 q^{6r}) 
    \left( (aq)^r + (aq)^{-r} \right)}
  {(q^3;q^3)_r (-a^3 q^3;q^3)_r (1-a^3) }. 
\label{aCap}
\end{gather}

In section 2, it will be revealed how identity~(\ref{Cap1}) arises
from two of the simplest possible Bailey pairs. 
Section 3 will be devoted to a derivation of the Bailey pair 
necessary to yield identity~(\ref{aCap}).  Once this is accomplished, the identities~(\ref{Cap1}) and (\ref{Cap2}) will be embedded in infinite
family of identities:
\begin{gather}
\sum_{n_1, \dots, n_k, j\geqq 0}  
  \frac{q^{N_1^2 + N_2^2 + \cdots +N_k^2} \legendre{n_k-j+1}{3}}{(q)_{2n_k-j} (q)_{n_1} (q)_{n_2} \cdots (q)_{n_k} (q)_{j}}
  = \frac{(q^{k},q^{5k},q^{6k};q^{6k}) (q^{4k},q^{8k}; q^{12k})}{(q)_\infty} \label{multiCap1},
\end{gather}
where 
$N_i = n_i + n_{i+1} + \cdots + n_k$.
\begin{gather}
1 + \underset{(n_1,\dots,n_k,r,j)\neq(0,0,\dots,0)}{\sum_{n_1,\dots,n_k,r,j
\geqq 0}}
 \frac{q^{3(M_1^2 + \cdots + M_{k}^2) + 
 \frac 32 r^2 -\frac 52r - j} (q^3;q^3)_{2j+r-1} (1+q^{2r+2j}) 
 (1-q^{6r+6j})}
{(q^3;q^3)_{n_1} \cdots (q^3;q^3)_{n_k} (q^3;q^3)_r (q^3;q^3)_j 
(-1;q^3)_{j+1} (q^3;q^3)_{n_k+2r+2j}} 
\nonumber\\ \label{multiCap2}
 = \frac{(-q^{3k-1},-q^{3k+1}, q^{6k}; q^{6k})_\infty}{(q^3;q^3)_\infty},
\end{gather}
where 
$M_i = n_i + n_{i+1} + \cdots + n_k + r + j$.  Notice that the
$k=1$ case of (\ref{multiCap2}) is equivalent to (\ref{Cap2}) since 
\[ \frac{(-q^2, -q^4, q^6; q^6)_\infty}{(q^3; q^3)_\infty} 
 = \frac{1}{(q^2,q^3,q^9,q^{10}; q^{12})_\infty}.\]

 In section 4, some related identities will be noted.  In section 5,
we conclude with some related open questions.

\section{Implications of two simple Bailey pairs}
We will require the standard machinery of Bailey's Lemma and Bailey pairs
(see \cite{wnb1}, \cite{wnb2}, \cite[Ch. 3]{qs}).
Recall that two sequences of rational functions $(\alpha_n (a,q), 
\beta_n (a,q))$
form a \emph{Bailey pair} if for all $n\geqq 0$,
\begin{equation} \label{BPdef}
  \beta_n(a,q) = \sum_{r=0}^n \frac{\alpha_r(a,q)}{(q)_{n-r} (aq)_{n+r}},
\end{equation}
and that for any Bailey pair $(\alpha_n (a,q), \beta_n (a,q))$, the identity
 \begin{equation}\label{WBL}
 \sum_{n=0}^\infty a^n q^{n^2} \beta_n (a,q) = 
\frac{1}{(aq)_\infty} \sum_{r=0}^\infty
 a^r q^{r^2} \alpha_r (a,q) 
 \end{equation}
holds (Andrews~\cite[p. 27, equation (3.33)]{qs}).

In the literature (see e.g. Andrews~\cite[section 3.5]{qs}) the implications
of a particular Bailey pair 
(often called the ``unit Bailey pair") consisting of 
an extremely simple $\beta_n$ and its corresponding $\alpha_n$ are
considered.  Here, in contrast, we consider Bailey pairs where the
$\alpha_n$'s are of an especially simple nature.

\begin{thm}\label{leftBP}
Suppose 
\[  \alpha_{n} = \left\{ 
   \begin{array}{ll}
      1, &\mbox{if $n=0$,} \\
      2, &\mbox{if $3 | n$ and $n>0$,} \\
      0, &\mbox{otherwise.}
    \end{array} \right. \] and     
    \[  \beta_{n} =  \sum_{r=-\infty}^\infty \frac{\gp{2n}{n-3r}}{(q)_{2n}}, \] 
    where the Gaussian polynomial $\gp{A}{B}$ is defined by
    \[ \gp{A}{B}:= \left\{ \begin{array}{ll} 
          (q)_A (q)_B^{-1} (q)_{A-B}^{-1}, &\mbox{if $0\leqq A\leqq B$},\\
          0,                               &\mbox{otherwise.}
          \end{array} \right. \]
Then $(\alpha_n, \beta_n ) $ form a Bailey pair.
\end{thm}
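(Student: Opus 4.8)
The plan is to verify the Bailey pair relation \eqref{BPdef} directly, by substituting the given $\alpha_r$ into the right-hand side and showing that the resulting sum equals the stated $\beta_n$. Since $\alpha_r$ is supported on multiples of $3$ (with $\alpha_0=1$ and $\alpha_{3r}=2$ for $r>0$), the sum $\sum_{r=0}^n \alpha_r/((q)_{n-r}(aq)_{n+r})$ collapses, after setting $a=1$, to
\[
\frac{1}{(q)_n (q)_n} + \sum_{r\geqq 1,\ 3r\leqq n} \frac{2}{(q)_{n-3r}(q)_{n+3r}}.
\]
The goal is to show this equals $\sum_{r=-\infty}^\infty \gp{2n}{n-3r}/(q)_{2n}$. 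Multiplying through by $(q)_{2n}$, the target identity becomes
\[
\sum_{r=-\infty}^\infty \gp{2n}{n-3r} = \gp{2n}{n} + 2\sum_{r\geqq 1} \gp{2n}{n-3r},
\]
which is just the observation that the bilateral sum over $r\in\mathbb Z$ splits into the $r=0$ term plus twice the sum over $r\geqq 1$, using the symmetry $\gp{2n}{n-3r}=\gp{2n}{n+3r}$ of the Gaussian binomial coefficient $\gp{2n}{n-k}=\gp{2n}{n+k}$. So the combinatorial identity is essentially a tautology once the terms are matched; the real content is the arithmetic bookkeeping that $(q)_{n-3r}(q)_{n+3r}$ in the denominator on the $\alpha$ side corresponds, after clearing $(q)_{2n}$, to the Gaussian polynomial $\gp{2n}{n-3r}=(q)_{2n}/((q)_{n-3r}(q)_{n+3r})$, valid precisely when $0\leqq n-3r\leqq 2n$, i.e. when $|r|\leqq n/3$ — which matches the support forced by $\alpha$.

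Concretely, I would proceed as follows. First, fix $a=1$ (the pair is stated without an $a$-dependence, so throughout $a=1$ and $(aq)_{n+r}=(q)_{n+r}$). Second, write out $\beta_n = \sum_{r=0}^n \alpha_r/((q)_{n-r}(q)_{n+r})$, note that only $r\equiv 0\pmod 3$ contributes, reindex $r\mapsto 3r$, and obtain the finite sum $\sum_{r\geqq 0} \alpha_{3r}/((q)_{n-3r}(q)_{n+3r})$ with the convention that $(q)_m^{-1}=0$ for $m<0$, so the sum automatically truncates at $r=\lfloor n/3\rfloor$. Third, pull out a factor $1/(q)_{2n}$ and recognize each summand as $\gp{2n}{n-3r}/(q)_{2n}$ (for $r=0$) or $2\gp{2n}{n-3r}/(q)_{2n}$ (for $r\geqq 1$). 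Fourth, invoke $\gp{2n}{n-3r}=\gp{2n}{n+3r}$ to fold the $r\geqq 1$ terms into the bilateral form, yielding $\beta_n = (q)_{2n}^{-1}\sum_{r=-\infty}^\infty \gp{2n}{n-3r}$, which is exactly the claimed $\beta_n$. One should also check the base case $n=0$ separately: there $\beta_0 = \alpha_0/((q)_0(q)_0)=1$, and the stated formula gives $\sum_r \gp{0}{-3r}/(q)_0 = \gp{0}{0} = 1$, so it is consistent.

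The only genuine subtlety — and the one place to be careful rather than cavalier — is the edge behavior of the Gaussian polynomial convention. The definition given in the statement, $\gp{A}{B}=(q)_A(q)_B^{-1}(q)_{A-B}^{-1}$ "if $0\leqq A\leqq B$" and $0$ otherwise, is evidently a typo for "$0\leqq B\leqq A$"; I would silently use the correct support condition $0\leqq B\leqq A$, under which $\gp{2n}{n-3r}$ is nonzero exactly when $0\leqq n-3r\leqq 2n$, i.e. $-n/3\leqq r\leqq n/3$, matching the truncation on the $\alpha$-side. No asymptotics, no $q$-series transformations, and no appeal to Bailey's lemma \eqref{WBL} itself are needed — this theorem is purely the verification of the defining relation \eqref{BPdef}, and the proof is a short direct computation. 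I expect the write-up to be under half a page.
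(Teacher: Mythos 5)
Your proposal is correct and follows essentially the same route as the paper: substitute $\alpha_r$ into the defining relation with $a=1$, observe that only multiples of $3$ contribute, and fold the resulting one-sided sum into the bilateral sum $\sum_{r=-\infty}^\infty \gp{2n}{n-3r}/(q)_{2n}$ via the symmetry of the Gaussian polynomial. Your attention to the support convention (and the typo $0\leqq A\leqq B$ for $0\leqq B\leqq A$) is a point the paper glosses over, but it does not change the argument.
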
 
\begin{proof} Considereing~(\ref{BPdef}) with $a=1$,
\begin{eqnarray*} 
 \beta_n  
 &=& \sum_{r=0}^n \frac{\alpha_r }{(q)_{n-r} (q)_{n+r}}\\
 &=& 1 + \sum_{r\geqq 1} \frac{2}{(q)_{n-3r} (q)_{n+3r}}\\
 &=& \sum_{r=-\infty}^\infty \frac{1}{(q)_{n-3r} (q)_{n+3r}}\\
 &=& \sum_{r=-\infty}^\infty \frac{\gp{2n}{n-3r}}{(q)_{2n}}. 
 \end{eqnarray*}
 \end{proof} 
 
 \begin{cor}\label{leftid}
 \begin{equation*}
\sum_{n=0}^{\infty}\sum_{r=-\infty}^\infty 
\frac{q^{n^2}\gp{2n}{n-3r}}{(q)_{2n}} = 
\frac{(-q^9, -q^9, q^{18} ; q^{18} )_{\infty} }{(q)_\infty }
\end{equation*}
\end{cor}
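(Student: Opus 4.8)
The plan is to apply the weak form of Bailey's lemma, equation~(\ref{WBL}), to the Bailey pair $(\alpha_n,\beta_n)$ just constructed in Theorem~\ref{leftBP}, specialized at $a=1$. First I would put $a=1$ in~(\ref{WBL}); its left-hand side becomes $\sum_{n\geqq 0} q^{n^2}\beta_n$, and since $\beta_n = \sum_{r=-\infty}^\infty \gp{2n}{n-3r}/(q)_{2n}$, this is precisely the double sum on the left of the corollary. Thus it remains only to evaluate the right-hand side of~(\ref{WBL}) at $a=1$, namely $(q)_\infty^{-1}\sum_{r\geqq 0} q^{r^2}\alpha_r$.

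Second, I would substitute the explicit values of $\alpha_r$ from Theorem~\ref{leftBP}: the only nonzero contributions come from $r=0$ (contributing $1$) and from $r=3m$ with $m\geqq 1$ (each contributing $2q^{9m^2}$). Hence $\sum_{r\geqq 0}q^{r^2}\alpha_r = 1 + 2\sum_{m\geqq 1}q^{9m^2} = \sum_{m=-\infty}^\infty q^{9m^2}$, a bilateral theta series.

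Third, I would recognize this theta series via the Jacobi triple product identity $\sum_{m=-\infty}^\infty z^m q^{m^2} = (-zq,\,-z^{-1}q,\,q^2;q^2)_\infty$: taking $z=1$ and replacing $q$ by $q^9$ gives $\sum_{m=-\infty}^\infty q^{9m^2} = (-q^9,-q^9,q^{18};q^{18})_\infty$. Multiplying by the factor $(q)_\infty^{-1}$ then yields exactly the right-hand side of the corollary.

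There is essentially no obstacle here: the statement is an immediate consequence of Theorem~\ref{leftBP}, the Bailey-lemma identity~(\ref{WBL}), and the Jacobi triple product. The only point worth stating explicitly is that all the rearrangements of sums are legitimate — either as formal power series in $q$ (for each $n$ the inner sum over $r$ is finite because the Gaussian polynomial vanishes for $|n-3r|>n$, and the factor $q^{n^2}$ gives $q$-adic convergence of the outer sum) or, when $|q|<1$, as absolutely convergent series of analytic functions.
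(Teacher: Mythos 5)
Your proposal is correct and follows exactly the paper's own route: insert the Bailey pair of Theorem~\ref{leftBP} into~(\ref{WBL}) with $a=1$, recognize $1+2\sum_{m\geqq 1}q^{9m^2}$ as the bilateral theta series $\sum_{m=-\infty}^{\infty}q^{9m^2}$, and apply the Jacobi triple product. The added remark on convergence of the rearrangements is a harmless (and reasonable) extra.
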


\begin{proof}
By Theorem~\ref{leftBP} and ~(\ref{WBL}) with $a=1$,
\begin{eqnarray*}
 \sum_{n=0}^{\infty}\sum_{r=-\infty}^\infty 
\frac{q^{n^2} \gp{2n}{n-3r}}{(q)_{2n}}  &=& 
 \frac{1}{(q)_\infty} \left\{ 1+\sum_{r=1}^\infty 2 q^{(3r)^2} \right\}\\
 &=& \frac{1}{(q)_\infty} \sum_{r=-\infty}^\infty q^{9r^2} \\
 &=& \frac{(-q^9, -q^9, q^{18} ; q^{18} )_{\infty} }{(q)_\infty },
\end{eqnarray*}
by Jacobi's triple product identity~\cite[p. 12, equation (1.6.1)]{gr}.
\end{proof}
 
\begin{thm}\label{rightBP}
If 
\[  \alpha_{n} = \left\{ 
   \begin{array}{ll}
      0, &\mbox{if $3 | n$,} \\
      1, &\mbox{otherwise.}
    \end{array} \right. \] and     
    \[  \beta_{n} = \sum_{r=-\infty}^\infty \frac{\gp{2n}{n-3r+1}}{(q)_{2n}} , \] then $(\alpha_n, \beta_n ) $ form a Bailey pair.
\end{thm}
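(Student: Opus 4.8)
The plan is to mimic exactly the proof of Theorem~\ref{leftBP}, starting from the defining relation~(\ref{BPdef}) with $a=1$ and summing the extremely sparse sequence $\alpha_n$. First I would write
\[ \beta_n = \sum_{r=0}^n \frac{\alpha_r}{(q)_{n-r}(q)_{n+r}}
 = \sum_{\substack{0\leqq r\leqq n\\ 3\nmid r}} \frac{1}{(q)_{n-r}(q)_{n+r}}. \]
The key combinatorial step is to recognize the right-hand side as a full bilateral sum with the multiples of $3$ deleted. Writing $\sum_{0\leqq r\leqq n}$ minus $\sum_{0\leqq r\leqq n,\ 3\mid r}$ and then symmetrizing (using that $(q)_{n-r}(q)_{n+r}$ is even in $r$ and that $(q)_{n+r}^{-1}=0$ for $r>0$ once $n+r$ exceeds the range, i.e. there is no tail), I would re-index so that the ``all $r$'' part becomes $\sum_{r=-\infty}^\infty (q)_{n-r}^{-1}(q)_{n+r}^{-1}$, which by the $q$-Vandermonde/Gaussian-polynomial identity equals $(q)_{2n}^{-1}\sum_{r} \gp{2n}{n-r}$, a sum whose terms are all $1$ in a suitable sense; more usefully I would directly target the shifted form $\sum_r \gp{2n}{n-3r+1}/(q)_{2n}$ appearing in the statement.

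The cleaner route, and the one I expect the author takes, is to reverse-engineer: start from the claimed $\beta_n=\sum_{r=-\infty}^\infty \gp{2n}{n-3r+1}/(q)_{2n}$, rewrite $\gp{2n}{n-3r+1}/(q)_{2n} = 1/\big((q)_{n-3r+1}(q)_{n+3r-1}\big)$, and substitute $s=3r-1$, so that as $r$ ranges over $\mathbb Z$, $s$ ranges over all integers $\equiv 2\pmod 3$, equivalently (pairing $r$ with $1-r$, i.e. $s$ with $-s$... ) over all integers $\equiv \pm 1 \pmod 3$ that are $\equiv 2 \pmod 3$; combined with the companion substitution $s=3r-1 \mapsto$ its negation one sees the exponents hit precisely the residues $1$ and $2$ mod $3$, i.e. all $s$ with $3\nmid s$. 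Thus
\[ \beta_n = \sum_{\substack{s=-n\\ 3\nmid s}}^{n} \frac{1}{(q)_{n-s}(q)_{n+s}}
 = \sum_{\substack{s=0\\ 3\nmid s}}^{n}\frac{\alpha_s + \alpha_{-s}}{(q)_{n-s}(q)_{n+s}}\cdot(\text{sym.}) , \]
which matches $\sum_{s=0}^n \alpha_s/((q)_{n-s}(q)_{n+s})$ with the stated $\alpha$ (value $1$ when $3\nmid s$, value $0$ when $3\mid s$, noting in particular $\alpha_0=0$ here, in contrast to Theorem~\ref{leftBP}). Hence $(\alpha_n,\beta_n)$ is a Bailey pair.

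The only genuinely delicate point is the index bookkeeping in the substitution $s=3r-1$: one must check that the map $r\mapsto 3r-1$ from $\mathbb Z$ is a bijection onto $\{s\in\mathbb Z : s\equiv 2 \pmod 3\}$ and that folding $r\leftrightarrow$ a reflection (to account for the fact that $\gp{2n}{n-3r+1}=\gp{2n}{n+3r-1}$ is \emph{not} symmetric in $r$ about $0$ but about $r=\tfrac13$) produces exactly the residues $1$ and $2$ modulo $3$ with multiplicity one each, and hence all nonmultiples of $3$. Equivalently, and perhaps more transparently, I would split $\sum_{r\in\mathbb Z}\gp{2n}{n-3r+1}$ into $r\geqq 1$ and $r\leqq 0$, re-index the second sum by $r\mapsto 1-r$ to get terms $\gp{2n}{n+3r-2}$ with $r\geqq 1$, and observe that $\{3r-1:r\geqq 1\}\cup\{-(3r-2):r\geqq 1\}\cup\{$the $r=0$ or $r=1$ boundary term$\}$ enumerates all residues $\not\equiv 0$; this boundary-term accounting is where an off-by-one error is most likely, so I would verify it against small cases $n=0,1,2$ before writing it up. Everything else is the same $q$-series manipulation already used in Theorem~\ref{leftBP}.
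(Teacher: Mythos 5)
Your proposal is correct and is essentially the paper's own argument run in reverse: the paper starts from the defining relation~(\ref{BPdef}) with $a=1$, splits the nonzero $\alpha_r$ into the residue classes $r\equiv 1$ and $r\equiv 2\pmod 3$, and recombines the two resulting sums into the single bilateral sum $\sum_{r\in\mathbb{Z}} 1/\big((q)_{n-3r+1}(q)_{n+3r-1}\big)$, which is exactly the folding and re-indexing you describe. The bookkeeping you worry about is in fact clean: splitting into $r\geqq 1$ and $r\leqq 0$ and re-indexing the latter by $r\mapsto 1-r$ leaves no boundary term, since $\{3r-1:r\geqq 1\}\cup\{3r-2:r\geqq 1\}$ is precisely the set of positive integers not divisible by $3$, each hit once, and $\alpha_0=0$.
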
 

\begin{proof}
\begin{eqnarray*}
 \beta_n  
 &=& \sum_{r=0}^n \frac{\alpha_r }{(q)_{n-r} (q)_{n+r}}\\
 &=& \sum_{r=0}^{\lfloor n/3 \rfloor} \frac{1}{(q)_{n-3r-1} (q)_{n+3r+1}}
    +\sum_{r=1}^{\lfloor n/3 \rfloor} \frac{1}{(q)_{n-3r+1} (q)_{n+3r-1}}\\
 &=& \sum_{r=-\infty}^\infty \frac{1}{(q)_{n-3r+1} (q)_{n+3r-1}}\\
 &=& \sum_{r=-\infty}^\infty \frac{\gp{2n}{n-3r+1}}{(q)_{2n}}.
 \end{eqnarray*}
 \end{proof}  
 
\begin{cor}\label{rightid}
\begin{equation*}
\sum_{n=0}^{\infty}\sum_{r=-\infty}^\infty \frac{q^{n^2}
\gp{2n}{n-3r+1}}{(q)_{2n}} = 
\frac{q (-q^3, -q^{15}, q^{18} ; q^{18} )_{\infty} }{(q)_\infty }
\end{equation*}
\end{cor}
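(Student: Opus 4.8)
The plan is to run the argument of Corollary~\ref{leftid} essentially verbatim, but with the Bailey pair of Theorem~\ref{rightBP} in place of the one from Theorem~\ref{leftBP}. First I would apply the weak form of Bailey's lemma~\eqref{WBL} with $a=1$: its left-hand side is $\sum_{n\geqq 0} q^{n^2}\beta_n$, and by Theorem~\ref{rightBP} this is exactly the double sum on the left of the corollary. Hence the corollary is equivalent to the assertion that $\frac{1}{(q)_\infty}\sum_{r\geqq 0} q^{r^2}\alpha_r$ equals the stated product. Substituting the given $\alpha_r$, which vanishes precisely when $3\mid r$, the task reduces to evaluating the theta-like sum $\sum_{r\geqq 0,\,3\nmid r} q^{r^2}$ in product form.

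Next I would symmetrize. Since $q^{r^2}$ is even in $r$ and the omitted term $r=0$ has $3\mid 0$, we have $\sum_{r\geqq 0,\,3\nmid r} q^{r^2}=\tfrac12\sum_{r\in\mathbb{Z},\,3\nmid r} q^{r^2}$. Splitting the bilateral sum according to $r\equiv 1\pmod 3$ and $r\equiv -1\pmod 3$, writing $r=3s+1$ and $r=3s-1$ respectively, and observing that the two resulting sums coincide under $s\mapsto -s$, gives $\sum_{r\in\mathbb{Z},\,3\nmid r} q^{r^2}=2q\sum_{s\in\mathbb{Z}} q^{9s^2+6s}$.

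Finally I would invoke the Jacobi triple product identity~\cite[p.~12, equation (1.6.1)]{gr}, in the form $\sum_{s\in\mathbb{Z}} q^{as^2+bs}=(q^{2a};q^{2a})_\infty(-q^{a+b};q^{2a})_\infty(-q^{a-b};q^{2a})_\infty$, with $a=9$ and $b=6$, to obtain $\sum_{s\in\mathbb{Z}} q^{9s^2+6s}=(q^{18};q^{18})_\infty(-q^{3};q^{18})_\infty(-q^{15};q^{18})_\infty$. Combining the three displays yields $\sum_{r\geqq 0,\,3\nmid r} q^{r^2}=q\,(-q^3,-q^{15},q^{18};q^{18})_\infty$, so the right-hand side of~\eqref{WBL} becomes $q\,(-q^3,-q^{15},q^{18};q^{18})_\infty/(q)_\infty$, which is the claimed product. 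I do not anticipate any genuine obstacle here; the only points needing care are the bookkeeping of the residue classes modulo $3$ and choosing the parameters in the triple product so that one lands on base $q^{18}$ with the arguments $-q^{3}$ and $-q^{15}$ (a useful sanity check is that the smallest surviving term is $r=1$, accounting for the overall factor of $q$).
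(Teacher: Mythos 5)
Your proposal is correct and follows the paper's proof in all essentials: apply \eqref{WBL} with $a=1$ to the Bailey pair of Theorem~\ref{rightBP}, reduce to the theta sum over $r$ with $3\nmid r$, and evaluate $\sum_{s\in\mathbb{Z}}q^{9s^2+6s+1}$ by the Jacobi triple product. The only cosmetic difference is that you symmetrize to a bilateral sum and halve, whereas the paper directly combines the two one-sided sums over $r\equiv\pm1\pmod 3$ into the single bilateral sum $\sum_{r\in\mathbb{Z}}q^{9r^2+6r+1}$.
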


\begin{proof}
By Theorem~\ref{rightBP} and ~(\ref{WBL}) with $a=1$,
\begin{eqnarray*}
 \sum_{n=0}^{\infty}\sum_{r=-\infty}^\infty 
\frac{q^{n^2} \gp{2n}{n-3r+1}}{(q)_{2n}}  &=& 
 \frac{1}{(q)_\infty} 
 \left\{ \sum_{r=0}^\infty  q^{(3r+1)^2} 
        +\sum_{r=1}^\infty  q^{(3r-1)^2} \right\}\\
 &=& \frac{1}{(q)_\infty} \sum_{r=-\infty}^\infty q^{9r^2 + 6r + 1} \\
 &=& \frac{q(-q^3, -q^{15}, q^{18} ; q^{18} )_{\infty} }{(q)_\infty },
\end{eqnarray*}
by Jacobi's triple product identity~\cite[p. 12, equation (1.6.1)]{gr}.
\end{proof}

\begin{thm} Identity~(\ref{Cap1}) is valid.
\end{thm}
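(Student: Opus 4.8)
The plan is to obtain identity~(\ref{Cap1}) by combining the two Bailey pairs of Theorems~\ref{leftBP} and~\ref{rightBP} in a way that reconstructs the Legendre symbol $\legendre{n-j+1}{3}$. Notice that $\legendre{m}{3}$ equals $1$ if $m\equiv 1\pmod 3$, equals $-1$ if $m\equiv 2\pmod 3$, and equals $0$ if $3\mid m$. So for fixed $n$, the inner sum $\sum_{j=0}^{2n}\legendre{n-j+1}{3}/((q)_{2n-j}(q)_j)$ splits according to the residue of $j$: the $j\equiv n\pmod 3$ terms contribute with sign $+1$, the $j\equiv n+2\pmod 3$ terms contribute with sign $-1$, and the $j\equiv n+1\pmod 3$ terms vanish. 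Reindexing $j = n - 3r$ and $j = n + 3r - 1$ (equivalently $j = n - (3r-1)$), one sees that $\sum_{j=0}^{2n}\legendre{n-j+1}{3}/((q)_{2n-j}(q)_j)$ is exactly $\beta_n^{\mathrm{left}} - \beta_n^{\mathrm{right}}$ in the notation of the two theorems, since $\gp{2n}{n-3r}/(q)_{2n} = 1/((q)_{n-3r}(q)_{n+3r})$ and $\gp{2n}{n-3r+1}/(q)_{2n} = 1/((q)_{n-3r+1}(q)_{n+3r-1})$. The first step, then, is to verify this decomposition carefully, matching residues and checking the shift in the Legendre argument.

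Granting that, the left side of~(\ref{Cap1}) becomes $\sum_{n\geqq 0} q^{n^2}(\beta_n^{\mathrm{left}} - \beta_n^{\mathrm{right}})$, which by Corollaries~\ref{leftid} and~\ref{rightid} equals
\[
\frac{(-q^9,-q^9,q^{18};q^{18})_\infty}{(q)_\infty} - \frac{q(-q^3,-q^{15},q^{18};q^{18})_\infty}{(q)_\infty}.
\]
So the second step is purely a theta-function manipulation: one must show that this difference equals $1/(q^2,q^3,q^9,q^{10};q^{12})_\infty$. I would rewrite each of the two $(q^{18})$-theta products via the Jacobi triple product as bilateral sums, combine them over a common modulus (the least common multiple consideration suggests working modulo $q^{18}$ or recasting as a single theta series with character), and then recognize the result as a product over residues modulo $12$. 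Equivalently, one can express the target product $1/(q^2,q^3,q^9,q^{10};q^{12})_\infty$ using $1/(q)_\infty$ times a finite theta quotient and reduce to a known quintuple-product or Jacobi-triple-product evaluation; comparing the two known analytic forms of Capparelli's product, e.g.\ matching against~(\ref{Cap0}), provides a useful cross-check.

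The main obstacle I anticipate is precisely this last theta-identity: confirming that
\[
(-q^9,-q^9,q^{18};q^{18})_\infty - q(-q^3,-q^{15},q^{18};q^{18})_\infty = \frac{(q)_\infty}{(q^2,q^3,q^9,q^{10};q^{12})_\infty}.
\]
This is a finite-product identity among theta functions, provable in principle by writing both sides as bilateral series and matching coefficients, but the bookkeeping of residues modulo $12$ against residues modulo $9$ and $18$ is delicate. A cleaner route may be to observe that the right-hand side, after multiplying by $(q)_\infty$, is $(q^2,q^3,q^9,q^{10};q^{12})_\infty^{-1}(q;q)_\infty = (q,q^4,q^5,q^6,q^7,q^8,q^{11},q^{12};q^{12})_\infty$, and then to recognize this eight-factor product as a product of two triple products (one supported on $\{1,5,6\}$ and one on $\{4,8,12\}$ modulo $12$, say), each of which matches one of the two terms on the left after a triple-product expansion. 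Once the residues line up, the identity follows; this reduction to a pair of Jacobi triple product evaluations is the step I would carry out in full detail, with everything else being routine reindexing.
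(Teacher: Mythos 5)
Your proposal follows essentially the same route as the paper: the left-hand side of~(\ref{Cap1}) is exactly $\sum_{n\geqq 0} q^{n^2}\bigl(\beta_n^{\mathrm{left}}-\beta_n^{\mathrm{right}}\bigr)$ for the Bailey pairs of Theorems~\ref{leftBP} and~\ref{rightBP}, and Corollaries~\ref{leftid} and~\ref{rightid} reduce everything to the theta identity you isolate. One caution on your proposed finish: since the two $(q^{18};q^{18})$-theta terms combine by \emph{subtraction}, you cannot match each factor of $(q,q^5,q^6;q^6)_\infty(q^4,q^8;q^{12})_\infty$ against one of the two terms individually after a triple-product expansion; the statement that this difference of theta series equals that product of two triple-product-style factors \emph{is} the quintuple product identity, which is the option you should commit to (and is exactly what the paper cites).
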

\begin{proof}
Essentially all we need to do is subtract the identity in 
Corollary~\ref{rightid} from the identity in Corollary~\ref{leftid}.
For the left hand side, observe that
\begin{gather*}
\sum_{n=0}^{\infty}\sum_{r=-\infty}^\infty 
\frac{q^{n^2} \gp{2n}{n-3r}}{(q)_{2n}} -
\sum_{n=0}^{\infty}\sum_{r=-\infty}^\infty 
\frac {q^{n^2} \gp{2n}{n-3r+1}}{(q)_{2n} }\\
= \sum_{n=0}^{\infty}\sum_{k=-\infty}^\infty 
\frac{q^{n^2} \legendre{k+1}{3} \gp{2n}{n-k}}{(q)_{2n}},
\end{gather*}
  (i.e. the inner sum on $k$ sums over the $2n$-th row of the $q$-Pascal
  triangle weighting consecutive summands in turn by the factors 
  $1$, $-1$, and $0$,)
\begin{gather*}
= \sum_{n=0}^\infty \sum_{j=0}^\infty 
 \frac{q^{n^2}  \legendre{n-j+1}{j} \gp{2n}{j}}{(q)_{2n}}
  \qquad\mbox{(by setting $j=n-k$),}\\
 = \sum_{n=0}^\infty \sum_{j=0}^\infty 
 \frac{q^{n^2}  \legendre{n-j+1}{j} }{(q)_{2n-j} (q)_j }.
\end{gather*}

For the right hand side, observe that
\begin{eqnarray*}
&& \frac{(-q^9, -q^9, q^{18} ; q^{18} )_{\infty} }{(q)_\infty }
 -q \frac{(-q^3, -q^{15}, q^{18} ; q^{18} )_{\infty} }{(q)_\infty } \\
 &=& \frac{ (q,q^5,q^6; q^6)_{\infty} (q^4,q^8; q^{12})_{\infty}}
   {(q)_\infty }\\
 && \qquad\qquad\qquad \mbox{(by the quintuple product identity
   \cite[p. 134, ex. 5.6]{gr})} \\
 &=& \frac{1}{(q^2,q^3,q^9,q^{10}; q^{12})_\infty}.
\end{eqnarray*}
\end{proof}

\begin{rmk} Identity~(\ref{Cap1}) can be rewritten as
\begin{gather}
  \sum_{n=0}^\infty \sum_{j=0}^\infty \frac{q^{n^2 + 2nj + j^2}}
    {(q)_{2j+1} (q)_{2n} } \left( \legendre{j-n+1}{3} +
     \legendre{j-n-1}{3} q^{2j+1} + \legendre{j-n}{3} q^{2n} \right)
     \nonumber\\
    = \frac{1}{(q^2,q^3,q^9,q^{10}; q^{12})_\infty}\label{altCap1}
\end{gather}
by splitting the inner sum on $j$ in the left hand side of~(\ref{Cap1})
into even and odd $j$, interchanging the order of summation and
replacing $n$ by $n+j$.  In this formulation, both sums are truly
infinite over all nonnegative $n$ and $j$.
\end{rmk}

\section{Another Bailey pair and its implications} 

Recall the standard notation for basic hypergeometric series
\[ \hgs{s+1}{s}{a_1, a_2, \dots, a_{s+1}}{b_1, b_2, \dots, b_s}{q}{z} 
:= \sum_{n=0}^\infty \frac{ (a_1)_n (a_2)_n \cdots (a_{s+1})_n} 
{(q)_n (b_1)_n (b_2)_n \dots (b_s)_n } z^n. \]

\begin{rmk}
The real challenge here was to find an appropriate $\alpha_r (a,q)$
so that 
  \begin{itemize}
    \item when $\alpha_r (a,q)$ is inserted into (\ref{BPdef}), the
    resulting expression is a (finite product multiplied by a) basic  
    hypergeometric series which
    can be transformed appropriately, and 
    \item when $\alpha_r (a,q)$ is inserted into (\ref{WBL}) and
    $a$ is set to $1$, the generating function 
      \[ \frac{1}{(q^2, q^3, q^9, q^{10}; q^{12})_{\infty}}
      =\sum_{n=0}^{\infty} C_2(n) q^n\] results.
   \end{itemize}
Once this is achieved, the power of Bailey's lemma and Bailey chains
allows us to derive a number of identities with little additional effort.
\end{rmk}

\begin{thm} \label{BP} If 
  \begin{equation}
   \alpha_r (a,q) := \frac{(a)_r (1-aq^{2r}) (-q)_{r-1}}{(q)_r (1-a)
    (-aq)_r}\left( (aq)^{r/3} + (aq)^{-r/3} \right) 
  \end{equation}
    and  
  \begin{equation}
    \beta_n (a,q) := 
    \sum_{j=0}^n \sum_{r=0}^{n-j} \frac{a^{-(r+j)/3} 
        q^{\frac 12 r^2 - \frac 56 r -\frac 13 j} (1-aq^{2j+2r}) 
         (a)_{2j+r} (1+ (aq)^{\frac 23(r+j)}) }
          { 2 (q)_j (q)_r (a)_{n+j+r+1} (-aq)_j (q)_{n-j-r}},
  \end{equation}
then $\big(\alpha_n (a,q) , \beta_n (a,q) \big)$ form a Bailey pair.
\end{thm}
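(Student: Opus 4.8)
The plan is to verify the defining relation~(\ref{BPdef}) directly: substitute the proposed $\alpha_r(a,q)$ into $\sum_{r=0}^n \alpha_r(a,q)/\big((q)_{n-r}(aq)_{n+r}\big)$ and show that the result collapses to the stated double sum for $\beta_n(a,q)$. The obvious difficulty is that $\alpha_r$ as written involves the awkward factor $(aq)^{r/3}+(aq)^{-r/3}$, so the single sum over $r$ is not a garden-variety $q$-hypergeometric series; it is a sum of \emph{two} such series, one for each sign in the exponent $\pm r/3$. I would treat the two pieces separately and symmetrically, or, cleanly, replace $q$ by a cube root formally so that $q^{r/3}$ becomes an honest power --- but since the target $\beta_n$ also carries the matching factor $a^{-(r+j)/3}(1+(aq)^{\frac23(r+j)})$, the cleanest route is to carry the symbol $(aq)^{r/3}$ along as a formal parameter and let it cancel against the corresponding factor on the $\beta$ side at the end.

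First I would start from the \emph{known} side. Rather than summing $\alpha$ into $\beta$ from scratch, the efficient approach is to recognize the pair as the image of a simpler, already-known Bailey pair under one of the standard Bailey-pair transformations --- most plausibly the ``$\beta \mapsto \beta'$'' construction obtained from Bailey's lemma with a special choice of parameters (the so-called Bailey chain move), or one of the explicit pair-to-pair maps recorded in Andrews~\cite[Ch.~3]{qs}. Concretely, I expect the underlying seed pair to be a variant of the pair whose $\alpha_r$ is $(a)_r(1-aq^{2r})(-q)_{r-1}\big((aq)^{r/3}+(aq)^{-r/3}\big)\big/\big((q)_r(1-a)(-aq)_r\big)$ with the $j$-summation inserted precisely by the iterate that introduces the factors $(q)_j$, $(-aq)_j$, and the shift $2j$ in the arguments. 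Under that iterate, a single-sum $\beta_n^{(0)}$ turns into the double sum over $j$ and $n-j$ displayed in the theorem, with the weight $q^{\frac12 r^2-\frac56 r}$ being exactly the ``$q^{N^2}$''-type factor produced by the chain step applied to an $\alpha_r$ supported with that quadratic exponent. So the real content is: (i) identify the seed Bailey pair; (ii) check it is a Bailey pair (this should be an easy $q$-binomial computation, in the spirit of the proofs of Theorems~\ref{leftBP} and~\ref{rightBP}); (iii) apply the standard iterate and read off that the result is literally the stated $(\alpha_n(a,q),\beta_n(a,q))$.

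If instead one prefers the bare-hands verification, the steps are: substitute $\alpha_r$, split off the $\pm$ halves, and in each half write $1/\big((q)_{n-r}(aq)_{n+r}\big)=\frac{1}{(q)_n(aq)_n}\cdot\frac{(q^{-n})_r}{(q^{1-n}/a)_r}(\text{power of }q)$ via the standard reversal identities $(q)_{n-r}=(q)_n(q^{-n})_r(-1)^r q^{\binom r2 -nr}/\cdots$ and similarly for $(aq)_{n+r}$; this turns the $r$-sum into a balanced $_6\phi_5$ (the factor $(1-aq^{2r})(a)_r/(1-a)(-aq)_r(-q)_{r-1}$ is exactly the ``very-well-poised'' decoration). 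Then apply a known summation/transformation for that $_6\phi_5$ --- most likely the nonterminating $_6\phi_5$ sum, or Watson's transformation --- to produce a single-term-per-$j$ residue, which after collecting powers of $q$ and $a$ reproduces the inner structure of $\beta_n$. The main obstacle I anticipate is bookkeeping the fractional powers $a^{\pm r/3}$, $q^{\pm 5r/6}$, $q^{-j/3}$ and verifying that every such fractional exponent on the $\alpha$-side is accounted for on the $\beta$-side with no leftover --- i.e.\ that the identity is genuinely an identity of formal series in $q^{1/3}$ and $a^{1/3}$; everything else is routine $q$-series manipulation of the type already exhibited earlier in the paper.
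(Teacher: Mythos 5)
Your second, ``bare-hands'' branch is the route the paper actually takes: substitute $\alpha_r$ into (\ref{BPdef}), reverse $(q)_{n-r}$ and $(aq)_{n+r}$, and split the sum into two basic hypergeometric series according to the sign in $(aq)^{\pm r/3}$ (the paper realizes each as a $\lim_{\tau\to 0}$ of a ${}_6\phi_5$ with $\tau aq$ upstairs and $1/\tau$ downstairs to absorb the $(-1)^r q^{-\binom{r}{2}}$-type factors coming from the reversal). The genuine gap is in the step you delegate to ``a known summation/transformation.'' Neither tool you name can work here. The nonterminating ${}_6\phi_5$ summation and Watson's transformation both require the very-well-poised argument relation ($z=aq/bcd$, resp.\ its ${}_8\phi_7$ analogue); the argument that actually arises is $\tau^{-1}a^{\pm 1/3}q^{n\pm 1/3}$, and the stray cube roots destroy that relation. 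More structurally, a summation would collapse the $r$-sum to a product, and Watson would trade it for a single ${}_4\phi_3$, i.e.\ a single sum --- but the target $\beta_n$ is an honest double sum over $j$ and $r$, so you need an \emph{expansion} formula that introduces a second summation index. The paper applies Gasper--Rahman \cite[p.~34, eq.~(2.4.1)]{gr}, which writes each ${}_6\phi_5$ as a $j$-indexed sum of multiples of terminating ${}_4\phi_3$'s, valid for general argument; after $\tau\to 0$ the double sum over $j$ and $r$ is read off directly. Without identifying an expansion formula of this kind, your computation cannot reach the stated $\beta_n$.

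Your first branch (find a seed pair and apply a Bailey-chain iterate) is not substantiated, and I do not see how to make it work: the standard iterate produces $\beta_n'=\sum_j \frac{(\rho_1)_j(\rho_2)_j(aq/\rho_1\rho_2)^j(aq/\rho_1\rho_2)_{n-j}}{(q)_{n-j}(aq/\rho_1)_n(aq/\rho_2)_n}\beta_j$, whose $(q)_{n-j}$ and $n$-dependent structure does not match the factors $(a)_{2j+r}$ in the numerator and $(a)_{n+j+r+1}$, $(-aq)_j$, $(q)_{n-j-r}$ in the denominator of the stated $\beta_n$. The paper makes no use of the chain in proving this theorem; the chain enters only afterwards, to embed the resulting identities in infinite families. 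Your closing observation that the identity must be read as one of formal series in $a^{1/3}$ and $q^{1/3}$, with all fractional exponents cancelling, is correct and is implicitly how the paper treats it.
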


\begin{proof} 
 \begin{eqnarray*}
&& \beta_n(a,q)\\
 &=& \sum_{r=0}^n \frac{\alpha_r(a,q)}{(q)_{n-r} (aq)_{n+r}} \\
           &=& \frac{1}{(q)_{n} (aq)_n}
     \sum_{r=0}^n \frac{(-1)^r q^{nr-\frac 12 r^2 + \frac 12 r}}
{(aq^{n+1})_r} \alpha_r (a,q)\\
   &=& \frac{1}{(q)_{n} (aq)_n}
     \sum_{r=0}^n \frac{(-1)^r q^{nr-\frac 12 r^2 + \frac 12 r}}
{(aq^{n+1})_r} \frac{(a)_r (1-aq^{2r}) (-q)_{r-1}}{(q)_r (1-a)
    (-aq)_r}\left( (aq)^{r/3} + (aq)^{-r/3} \right) \\
    &=& \frac{1}{(q)_{n} (aq)_n}
     \sum_{r=0}^n \frac{(-1)^r q^{nr-\frac 12 r^2 + \frac 12 r}}
{(aq^{n+1})_r} \frac{(a)_r (aq^2;q^2)_r (-1)_{r}}{2(q)_r (a;q^2)_r
    (-aq)_r}\left( (aq)^{r/3} + (aq)^{-r/3} \right) \\
  &=& \frac{1}{2(q)_n (aq)_n} \lim_{\tau\to 0}
   \left( \hgs{6}{5}{a,q\sqrt{a},-q\sqrt{a},-1,\tau a q, q^{-n}}
             {\sqrt{a}, -\sqrt{a}, -aq, \frac{1}{\tau}, a q^{n+1}}{q}
             {\tau^{-1} a^{\frac 13} q^{n+\frac 13}}\right.\\
  && \qquad\qquad\qquad\qquad
  + \left. \hgs{6}{5}{a,q\sqrt{a},-q\sqrt{a},-1,\tau a q, q^{-n}}
              {\sqrt{a},-\sqrt{a}, -aq,\frac{1}{\tau}, a q^{n+1}}{q}
              {\tau^{-1} a^{-\frac 13} q^{n-\frac 13}} 
   \right)\\
   &=& \frac{1}{2(q)_n (aq)_n} \lim_{\tau\to 0}
   \left( \sum_{j=0}^{n} \frac{ (-\frac 1\tau)_{j} (q\sqrt{a})_{j} 
      (-q\sqrt{a})_j (-1)_{j} (\tau aq)_{j} (q^{-n})_{j} (a)_{2j}}
      {(q)_j (\sqrt{a})_j (\sqrt{a})_j (-aq)_j (\frac 1\tau)_j 
       (aq^{n+1})_{j} } a^{j/3} q^{nj + \frac 56 j - \frac 12 j^2} \right. \\
       &&\qquad\qquad\qquad\qquad
       \times 
       \hgs{4}{3}{aq^{2j},q^{j+1}\sqrt{a},-q^{j+1}\sqrt{a}, q^{j-n}}
       {q^j\sqrt{a},-q^j\sqrt{a},aq^{n+j+1}}{q}{-a^{\frac 13}
       q^{n-j+\frac 13}}\\
      &&\qquad\qquad\qquad
      +\sum_{j=0}^{n} \frac{ (-\frac 1\tau)_{j} (q\sqrt{a})_{j} 
      (-q\sqrt{a})_j (-1)_{j} (\tau aq)_{j} (q^{-n})_{j} (a)_{2j}}
      {(q)_j (\sqrt{a})_j (\sqrt{a})_j (-aq)_j (\frac 1\tau)_j 
       (aq^{n+1})_j} a^{-j/3} q^{nj + \frac 16 j - \frac 12 j^2}  \\
       &&\qquad\qquad\qquad\qquad
       \times\left. 
       \hgs{4}{3}{aq^{2j},q^{j+1}\sqrt{a},-q^{j+1}\sqrt{a}, q^{j-n}}
       {q^j\sqrt{a},-q^j\sqrt{a},aq^{n+j+1}}{q}{-a^{-\frac 13} q^{n-j-\frac 13}}
       \right)\\
       &&\qquad\qquad\mbox{(by~\cite[p. 34, equation (2.4.1)]{gr})}\\
       &=& \frac{1}{2(q)_n (aq)_n} \sum_{j=0}^n \sum_{r=0}^{n-j}
       \frac{ (aq^2;q^2)_{j+r} (q^{-n})_{j+r} (a)_{2j+r}
       (1+ (aq)^{\frac 23(j+r)} ) }
        {(q)_j (q)_r (a;q^2)_{j+r} (-aq)_j (aq^{n+1})_{j+r} }\\
        &&\qquad\qquad\qquad\qquad\times
        (-1)^{j+r} a^{-\frac 13 j -\frac 13 r}
        q^{nj + nr +\frac 16 j - \frac 12 j^2 - \frac 13 r - jr}\\
        &=&\sum_{j=0}^n \sum_{r=0}^{n-j} \frac{a^{-(r+j)/3} 
        q^{\frac 12 r^2 - \frac 56 r -\frac 13 j} (aq^2;q^2)_{j+r} 
         (a)_{2j+r} (1+ (aq)^{\frac 23(r+j)}) }
          { 2 (q)_j (q)_r (aq)_{n+j+r} (a;q^2)_{j+r} (-aq)_j (q)_{n-j-r}}\\
        &=&\sum_{j=0}^n \sum_{r=0}^{n-j} \frac{a^{-(r+j)/3} 
        q^{\frac 12 r^2 - \frac 56 r -\frac 13 j} (1-aq^{2j+2r}) 
         (a)_{2j+r} (1+ (aq)^{\frac 23(r+j)}) }
          { 2 (q)_j (q)_r (a)_{n+j+r+1} (-aq)_j (q)_{n-j-r}}.
\end{eqnarray*}
\end{proof}

\begin{thm} Identity~(\ref{aCap}) is valid.
\end{thm}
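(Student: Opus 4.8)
The plan is to feed the Bailey pair constructed in Theorem~\ref{BP} into the weak form of Bailey's lemma~(\ref{WBL}), keeping $a$ and $q$ free, and then to perform the substitutions $q\mapsto q^3$ and $a\mapsto a^3$, under which every fractional power of $a$ and $q$ appearing in Theorem~\ref{BP} becomes integral.

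First I would dispose of the right-hand side. Since $\alpha_r(a,q)$ is given in closed form, (\ref{WBL}) reads
\[
 \sum_{n\geqq 0} a^n q^{n^2}\beta_n(a,q)
 = \frac{1}{(aq)_\infty}\sum_{r\geqq 0}
   \frac{a^r q^{r^2}(a)_r(1-aq^{2r})(-q)_{r-1}\big((aq)^{r/3}+(aq)^{-r/3}\big)}
        {(q)_r(1-a)(-aq)_r},
\]
and applying $q\mapsto q^3$, $a\mapsto a^3$ — so that $(a^3q^3)^{r/3}=(aq)^r$ — turns this right-hand side into exactly the right-hand side of~(\ref{aCap}).

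Second, for the left-hand side I would substitute the explicit double sum for $\beta_n(a,q)$ into $\sum_{n}a^n q^{n^2}\beta_n(a,q)$ and re-index by writing $n=m+j+r$ with $m\geqq 0$; then $(q)_{n-j-r}=(q)_m$ and $(a)_{n+j+r+1}=(a)_{m+2j+2r+1}$, the constraints $0\leqq j\leqq n$ and $0\leqq r\leqq n-j$ disappear, and $m,j,r$ run independently over the nonnegative integers. Collecting powers (in particular $a^n a^{-(r+j)/3}=a^{m+\frac 23(j+r)}$ and $q^{n^2}q^{\frac 12 r^2-\frac 56 r-\frac 13 j}=q^{m^2+j^2+\frac 32 r^2+2mj+2mr+2jr-\frac 56 r-\frac 13 j}$) and then applying $q\mapsto q^3$, $a\mapsto a^3$ — so that $a^{m+\frac 23(j+r)}\mapsto a^{3m+2j+2r}$, $(aq)^{\frac 23(r+j)}\mapsto(aq)^{2r+2j}$, $(1-aq^{2j+2r})\mapsto(1-a^3q^{6j+6r})$, and the $q$-exponent is tripled — produces precisely the left-hand side of~(\ref{aCap}) after renaming $m$ as $n$.

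There is essentially no obstacle left once Theorem~\ref{BP} is in hand, since that is where the real work (the $_6\phi_5$/$_4\phi_3$ reduction verifying~(\ref{BPdef})) was done. The only thing requiring care is the bookkeeping of the exponents of $a$ and $q$ through the re-indexing $n=m+j+r$ and the substitution $q\mapsto q^3$, $a\mapsto a^3$, together with checking that the summation ranges transform as claimed, so that the triple sum in~(\ref{aCap}) is genuinely over all nonnegative $n,r,j$.
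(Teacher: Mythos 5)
Your proposal is correct and follows the same route as the paper: insert the Bailey pair of Theorem~\ref{BP} into~(\ref{WBL}), re-index the left-hand side (your single substitution $n=m+j+r$ is exactly the paper's two successive interchanges and shifts $n\mapsto n+j$, $n\mapsto n+r$), and apply $a\mapsto a^3$, $q\mapsto q^3$. The exponent bookkeeping you record checks out against both sides of~(\ref{aCap}).
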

\begin{proof}
Insert the Bailey pair in Theorem~\ref{BP} into  
equation~(\ref{WBL}), and then replace $a$ by $a^3$ and
$q$ by $q^3$ throughout.  On the left hand side, interchange the
order of summation bringing $j$ out in front of $n$ 
and replace $n$ by $n+j$.  Then, interchange the order of summation
bringing $r$ in front of $n$ and replace $n$ by $n+r$.
\end{proof} 

\begin{rmk} Andrews~\cite{geapc} pointed out that a direct proof
(i.e. one that is independent of Bailey's lemma) of~(\ref{aCap}) is possible.
Start out with the left hand side of~(\ref{aCap}) and
set $t=r+j$ so that the double sum is now on $r$ and $t$.  The 
inner sum on $r$ is 
  \[ \hgs{2}{1}{-a^{-3} q^{-3t}, q^{-3t}}{a^{-3} q^{3-6t}}{q^3}{q^3} \]
 which is summable by the
$q$-Chu-Vandermonde formula~\cite[p. 236, equation (II.6)]{gr}.
This form can then be converted to the right hand side using
a formula due to Euler~\cite[p. 19, equation (2.2.5)]{top}.
 \end{rmk}

\begin{cor} Identity~(\ref{Cap2}) is valid.
\end{cor}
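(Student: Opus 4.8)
The plan is to derive~(\ref{Cap2}) from~(\ref{aCap}) by specializing $a=1$. Since~(\ref{aCap}) is an identity of rational functions of $a$ (equivalently, of formal power series in $q$), it suffices to rewrite both sides in a form that is manifestly regular at $a=1$ and then put $a=1$; the only delicate point is the handling of the apparent $0/0$ expressions produced by the factors $1-a^3$ and $(a^3;q^3)_m$, together with the reading of symbols such as $(q^3;q^3)_{-1}$ via the standard convention $(A;q)_m=(A;q)_\infty/(Aq^m;q)_\infty$.

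First I would peel off the two ``singular-looking'' terms. On the left of~(\ref{aCap}) the $(n,j,r)=(0,0,0)$ summand equals $(a^3;q^3)_0(1+1)(1-a^3)/\big(2(a^3;q^3)_1\big)=1$ for every $a$; on the right the $r=0$ summand equals $2(-q^3;q^3)_{-1}=1$, using $(-q^3;q^3)_{-1}=(-q^3;q^3)_\infty/(-1;q^3)_\infty=\tfrac12$ and the cancellation of the two factors $1-a^3$. For the remaining left-hand summands I would use
\[ \frac{(a^3;q^3)_{2j+r}}{(a^3;q^3)_{n+2j+2r+1}}=\frac{1}{(a^3q^{3(2j+r)};q^3)_{n+r+1}}, \]
observing that when $2j+r=0$ the surviving numerator factor $1-a^3q^{6j+6r}=1-a^3$ cancels the simple pole of $1/(a^3;q^3)_{n+1}$ at $a=1$; for the remaining right-hand summands ($r\geq1$) I would use $(a^3;q^3)_r/(1-a^3)=(a^3q^3;q^3)_{r-1}$. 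After these rewritings every term on each side is a rational function of $a$ regular at $a=1$, and the series converge for $|q|<1$, so $a=1$ may be substituted term by term.

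Setting $a=1$ on the left then produces $1$ plus a sum over $(n,j,r)\neq(0,0,0)$; applying $1/(q^{3(2j+r)};q^3)_{n+r+1}=(q^3;q^3)_{2j+r-1}/(q^3;q^3)_{n+2r+2j}$ and $2(-q^3;q^3)_j=(-1;q^3)_{j+1}$ reproduces precisely the left side of~(\ref{Cap2}). Setting $a=1$ on the right gives
\[ \frac{1}{(q^3;q^3)_\infty}\left(1+\sum_{r\geq1}\frac{q^{3r^2}(q^3;q^3)_{r-1}(-q^3;q^3)_{r-1}(1-q^{6r})(q^r+q^{-r})}{(q^3;q^3)_r(-q^3;q^3)_r}\right). \]
The decisive simplification is that $(q^3;q^3)_{r-1}(1-q^{6r})/(q^3;q^3)_r=1+q^{3r}$ and $(-q^3;q^3)_{r-1}/(-q^3;q^3)_r=1/(1+q^{3r})$, so the $r$th term collapses to $q^{3r^2}(q^r+q^{-r})$ and the bracketed sum becomes
\[ 1+\sum_{r\geq1}q^{3r^2}(q^r+q^{-r})=\sum_{r=-\infty}^\infty q^{3r^2+r}. \]
By Jacobi's triple product identity this equals $(q^6;q^6)_\infty(-q^2;q^6)_\infty(-q^4;q^6)_\infty=(-q^2,-q^4,q^6;q^6)_\infty$, and dividing by $(q^3;q^3)_\infty$ and invoking the product identity $(-q^2,-q^4,q^6;q^6)_\infty/(q^3;q^3)_\infty=1/(q^2,q^3,q^9,q^{10};q^{12})_\infty$ recorded just after~(\ref{multiCap2}) gives the right side of~(\ref{Cap2}).

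The main obstacle is not conceptual but bookkeeping: one must track the $0/0$ cancellations uniformly and interpret the borderline indices correctly — in particular the $(0,0,0)$ term on the left, the $r=0$ term on the right, and the $j=r=0$ terms, where $(q^3;q^3)_{-1}(1-q^0)$ must be read as its regularized value $1$. Once these are handled, the substance of the argument is merely the telescoping of the right-hand sum followed by Jacobi's triple product identity.
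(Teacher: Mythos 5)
Your proposal is correct and follows the same route as the paper: the paper's proof is simply ``set $a=1$ in~(\ref{aCap}), apply Jacobi's triple product identity to the right-hand side, and simplify the resulting product.'' Your write-up supplies the details the paper leaves implicit (the $0/0$ cancellations at $(n,j,r)=(0,0,0)$ and $r=0$, the telescoping $(q^3;q^3)_{r-1}(1-q^{6r})(-q^3;q^3)_{r-1}/\big((q^3;q^3)_r(-q^3;q^3)_r\big)=1$, and the bilateralization to $\sum_{r\in\mathbb{Z}}q^{3r^2+r}$), and these details are all accurate.
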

\begin{proof} Set $a=1$ in identity~(\ref{aCap}), then apply Jacobi's
triple product identity~\cite[p. 63, (7.1)]{qs} to the right hand side,
and simplify the resulting product.
\end{proof}

 Now that Bailey pairs have been established, it is a simple matter to
embed the analytic Capparelli identities into infinite families of
identities using the notion of 
the ``Bailey chain"~(\cite{multiRR},~\cite[p. 28 ff.]{qs}):
\begin{thm} Identity~(\ref{multiCap1}) is valid.
\end{thm}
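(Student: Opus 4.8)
The plan is to iterate Bailey's lemma on the Bailey pair established in Theorem~\ref{leftBP} minus that of Theorem~\ref{rightBP}, exactly as in the proof that identity~(\ref{Cap1}) is valid, but now inserting $k-1$ applications of the Bailey chain before invoking~(\ref{WBL}). Recall that if $(\alpha_n,\beta_n)$ is a Bailey pair relative to $a$, then so is $(\alpha_n',\beta_n')$ where $\alpha_n' = a^n q^{n^2}\alpha_n$ and $\beta_n' = \sum_{r=0}^n \frac{a^r q^{r^2}}{(q)_{n-r}}\beta_r$; iterating this $k-1$ times starting from the pair $(\alpha_n,\beta_n)$ with $\alpha_n$ the ``$\legendre{n+1}{3}$-weighted'' sequence (i.e. $\alpha_0=1$, $\alpha_n = \legendre{n+1}{3}\cdot 2$-type data coming from subtracting the two theorems, more precisely $\alpha_n = 2\legendre{n+1}{3}$ for $n\ge 1$ with the Legendre symbol read so that multiples of $3$ get a different treatment — as computed in the proof of~(\ref{Cap1})) produces a new $\beta_n^{(k-1)}$ which is a $(k-1)$-fold nested sum over $n_1,\dots,n_{k-1}$, and then one final application of~(\ref{WBL}) with $a=1$ gives~(\ref{multiCap1}).

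Concretely, first I would record the single Bailey pair relative to $a=1$ obtained by subtracting: $\alpha_0=1$ and for $n\ge 1$, $\alpha_n = 2\legendre{n+1}{3}$ in the sense that $\alpha_n$ vanishes unless $n\equiv\pm1\pmod 3$ (matching the combinatorial observation in the proof of~(\ref{Cap1}) that consecutive $q$-Pascal entries are weighted $1,-1,0$), while $\beta_n = \sum_{j\ge 0}\frac{\legendre{n-j+1}{3}}{(q)_{2n-j}(q)_j}$. Next I would apply the Bailey chain iteration $k-1$ times: each step introduces a new summation variable $n_i$ and the factor $q^{N_i^2}/(q)_{n_i}$ where $N_i=n_i+\cdots+n_k$, by the standard computation $\sum_{n_{i}} \frac{q^{N_i^2}}{(q)_{n_i - n_{i+1}}}$-style telescoping (here one must be careful to set up the indices so that the innermost index plays the role of ``$n$'' in $\beta_n$). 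After $k-1$ iterations the left side of~(\ref{WBL}) becomes precisely the $k$-fold sum $\sum_{n_1,\dots,n_k,j}\frac{q^{N_1^2+\cdots+N_k^2}\legendre{n_k-j+1}{3}}{(q)_{2n_k-j}(q)_{n_1}\cdots(q)_{n_k}(q)_j}$, where the innermost summation index from the original pair has been relabelled $n_k$ and the $j$-sum is carried along untouched.

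For the right-hand side, the $k-1$ iterations multiply the $\alpha$-side data by $q^{(k-1)r^2}$ (since at each stage $\alpha_r \mapsto q^{r^2}\alpha_r$ with $a=1$), so after the final~(\ref{WBL}) with $a=1$ the right side is
\[
\frac{1}{(q)_\infty}\sum_{r=-\infty}^{\infty} q^{kr^2}\Bigl(\text{weight}\Bigr),
\]
where the weight is the same $1,-1,0$ pattern on residues of $r$ mod $3$; explicitly this is $\frac{1}{(q)_\infty}\bigl(\sum_r q^{k(3r)^2} - \sum_r q^{k(3r+1)^2}\bigr)$, i.e. $\frac{1}{(q)_\infty}$ times a difference of two theta functions of nome $q^{9k}$, $q^{k}$ respectively. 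Applying Jacobi's triple product to each theta series and then the quintuple product identity (just as in the $k=1$ case in the proof of~(\ref{Cap1})) collapses this to $\frac{(q^k,q^{5k},q^{6k};q^{6k})_\infty (q^{4k},q^{8k};q^{12k})_\infty}{(q)_\infty}$, which is the right side of~(\ref{multiCap1}).

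The main obstacle I anticipate is purely bookkeeping: getting the iterated $\beta$-sum's index conventions to match the stated form on the left of~(\ref{multiCap1}) — in particular verifying that after relabelling the original pair's index as $n_k$, the $j$-summation (which is ``internal'' to the original $\beta_n$ and does not interact with the chain) attaches correctly, and that the exponent $N_1^2+\cdots+N_k^2$ emerges with exactly these partial sums $N_i=n_i+\cdots+n_k$ rather than some reversed or shifted convention. There is no analytic difficulty: the Bailey chain step is a finite, convergent manipulation, and the theta-to-product step is a verbatim repetition of the $k=1$ argument already carried out above with $q$ replaced by $q^k$. So the proof is short, and I would write it as: ``Iterate Bailey's lemma $k-1$ times on the Bailey pair of Theorem~\ref{leftBP} minus that of Theorem~\ref{rightBP}, then apply~(\ref{WBL}) with $a=1$ and use the quintuple product identity as in the proof of~(\ref{Cap1}), with $q$ replaced by $q^k$ in the product side.''
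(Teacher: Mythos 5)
Your proposal is correct and takes essentially the same route as the paper: the author simply inserts the two Bailey pairs of Theorems~\ref{leftBP} and~\ref{rightBP} into Andrews' iterated form of Bailey's lemma (equation (3.34) of \cite{qs}, which packages exactly the $k-1$ Bailey chain steps plus the final application of~(\ref{WBL}) that you spell out), subtracts, and relabels, with the product side handled by the same theta-difference-plus-quintuple-product computation with $q\mapsto q^k$ that you describe. The only blemish is your intermediate description of the subtracted $\alpha_n$ as $2\legendre{n+1}{3}$ (the correct values are $2$ for positive multiples of $3$ and $-1$ otherwise), but the theta series you actually write down are the right ones, so this does not affect the argument.
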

\begin{proof} Insert the Bailey pairs from Theorem~\ref{leftBP}
and Theorem~\ref{rightBP} into
equation (3.34) of Andrews~\cite[p. 30]{qs}, with $a=1$.  Subtract 
the second equation from the first, interchange orders of summation
and change summation variables as appropriate.
\end{proof}

\begin{thm}
Identity~(\ref{multiCap2}) is valid.
\end{thm}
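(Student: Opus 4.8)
The plan is to run exactly the argument that took us from~(\ref{WBL}) to~(\ref{aCap}) and then from~(\ref{aCap}) to~(\ref{Cap2}), starting now from an iterate of the Bailey chain in place of a single application of the weak Bailey lemma. Concretely, I would insert the Bailey pair $\bigl(\alpha_n(a,q),\beta_n(a,q)\bigr)$ of Theorem~\ref{BP} into equation (3.34) of Andrews~\cite[p.~30]{qs}, the identity obtained by iterating the Bailey chain $k-1$ times and then invoking~(\ref{WBL}). Since each chain step sends $\alpha_n(a,q)$ to $a^nq^{n^2}\alpha_n(a,q)$, after $k-1$ steps the right-hand side that emerges is $\frac{1}{(aq)_\infty}\sum_{r\geqq0}a^{kr}q^{kr^2}\alpha_r(a,q)$, while the left-hand side becomes a multiple sum over $n_1,\dots,n_k$ (the $k-1$ chain variables together with the weak-Bailey-lemma variable) and over the variables $r,j$ already internal to $\beta_n$. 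I would then replace $a$ by $a^3$ and $q$ by $q^3$ throughout, exactly as in the proof of~(\ref{aCap}).

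On the left-hand side I would next perform the two reindexings used to derive~(\ref{aCap}): bring the $j$-summation outside and replace the innermost variable $n_k$ (the argument of $\beta$, after the standard relabeling that turns the chain's difference-Pochhammers into $(q^3;q^3)_{n_1},\dots,(q^3;q^3)_{n_k}$) by $n_k+j$, then bring the $r$-summation outside and replace $n_k$ by $n_k+r$. With $M_i=n_i+\cdots+n_k+r+j$, the quadratic exponent $3(N_1^2+\cdots+N_k^2)$ inherited from the chain (where $N_i=n_i+\cdots+n_k$) becomes $3(M_1^2+\cdots+M_k^2)$, which together with the $\frac32 r^2-\frac52 r-j$ internal to $\beta_n(a^3,q^3)$ reproduces the exponent in~(\ref{multiCap2}); the Pochhammer symbols reorganize — using $(-1;q^3)_{j+1}=2(-q^3;q^3)_j$ to absorb the explicit factor $2$, and the cancellation of $1-a^3$ between $(a^3;q^3)_{2j+r}$ in the numerator and $(a^3;q^3)_{n_k+2j+2r+1}$ in the denominator when $2j+r\geqq1$ — into the summand of~(\ref{multiCap2}). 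Finally I would set $a=1$: as in the corollary deriving~(\ref{Cap2}), the only apparent singularities (the $1-a^3$ in the denominator of $\alpha_r(a^3,q^3)$ and the $2j+r=0$ terms) are removable, and the $(n_1,\dots,n_k,r,j)=(0,\dots,0)$ contribution is exactly the separated $1$. On the right-hand side one checks, as in that corollary, that the $r$-th term collapses to $q^{3kr^2}(q^r+q^{-r})$ for $r\geqq1$ via $1-q^{6r}=(1-q^{3r})(1+q^{3r})$, and to $1$ for $r=0$, so the right-hand side equals $\frac{1}{(q^3;q^3)_\infty}\sum_{r=-\infty}^\infty q^{3kr^2+r}$; Jacobi's triple product identity~\cite[p.~63, (7.1)]{qs} then turns this theta series into $(-q^{3k-1},-q^{3k+1},q^{6k};q^{6k})_\infty$, the claimed product. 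For $k=1$ there are no chain steps and the whole argument collapses to the already-established identity~(\ref{Cap2}).

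I expect the genuine difficulty to be entirely clerical: keeping the exponents and Pochhammer shifts straight while the $k-1$ spectator chain variables ride along through the two reindexings borrowed from~(\ref{aCap}), and being careful about the order in which the $a\to1$ specialization is taken relative to those reindexings so that the removable singularities — and the separated constant term $1$ — are handled correctly. No new identity is needed beyond Theorem~\ref{BP} together with the standard Bailey-chain and Jacobi-triple-product machinery already in place, so once the bookkeeping is carried out the result falls out.
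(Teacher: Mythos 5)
Your proposal is correct and follows exactly the paper's route: insert the Bailey pair of Theorem~\ref{BP} into equation (3.34) of Andrews, interchange and reindex the sums as in the derivation of~(\ref{aCap}), replace $a$ by $a^3$ and $q$ by $q^3$, set $a=1$, and finish with Jacobi's triple product. The paper's own proof is a two-sentence sketch of this same procedure; your version simply supplies the bookkeeping it leaves implicit.
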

\begin{proof} Insert $\alpha_n (a,q)$ and $\beta_n(a,q)$ into
equation (3.34) of Andrews~\cite[p. 30]{qs}, interchange orders of summation
and change summation variables as appropriate. Finally, replace $a$ by
$a^3$ and $q$ by $q^3$ throughout.
\end{proof}

\section{Related identities}
In order to obtain the $a$-generalization of the analytic counterpart
of Capparelli's conjecture, the Bailey pair from Theorem~\ref{BP}
was inserted into equation (\ref{WBL}), which is a limiting case of
Bailey's lemma~\cite[p. 25, Thm. 3.3]{qs}.  
We now require a different limiting case of Bailey's lemma:
\hfil\break
\emph{If $(\alpha_r (a,q), \beta_j (a,q))$ form a Bailey pair, then}
\begin{gather} 
   \sum_{n\geqq 0} a^n q^{n^2} (-q;q^2)_n \beta_n (a,q^2) 
  =\frac{(-aq;q^2)_\infty}{(aq^2;q^2)_\infty} 
   \sum_{r=0}^\infty \frac{ a^r q^{r^2} (-q;q^2)_r} {(-aq;q^2)_r}
    \alpha_r(a,q^2). \label{ATNSBL}
  \end{gather}

Now, inserting the Bailey pair from Theorem~\ref{BP} into (\ref{ATNSBL}), we obtain the identity
\begin{gather}
\sum_{n=0}^\infty \sum_{r=0}^\infty \sum_{j=0}^\infty
 \frac{a^{3n+2r+2j} 
   q^{3n^2 + 6 r^2 + 3j^2 + 6nj + 6nr + 6rj- 5r - 2j} 
 (-q^3;q^6)_{n+j+r} (a^3;q^3)_{2j+r} }
{2 (q^6;q^6)_n (q^6;q^6)_r  (q^6;q^6)_j (-a^3 q^6;q^6)_j 
(a^3,q^6)_{n+2j+2r+1}} \nonumber \\ 
 \times (1+ a^{2r+2j} q^{4r+4j}) (1-a^3 q^{12j+12r}) \nonumber\\
 =\frac{(-a^3 q^3)_\infty}{(a^3 q^6;q^6)_\infty} 
 \sum_{r=0}^\infty
\frac{a^{3r} q^{3r^2} (-q^3;q^6)_r (a^3;q^6)_r (-q^6;q^6)_{r-1} (1-a^3 q^{12r})  \left( (aq^2)^r + (aq^2)^{-r} \right)}
  {(q^6;q^6)_r (-a^3 q^6;q^6)_r (1-a^3) }, 
\label{aATNSCap}
\end{gather}
which, for $a=1$, yields
\begin{gather}
1+\underset{(n,j,r)\neq (0,0,0)}{\sum_{n=0}^\infty \sum_{r=0}^\infty \sum_{j=0}^\infty}
 \frac{ q^{3n^2 + 6 r^2 + 3j^2 + 6nj + 6nr + 6rj- 5r - 2j} 
 (-q^3;q^6)_{n+j+r} (q^3;q^3)_{2j+r-1} (1+ q^{4r+4j}) 
 (1-q^{12j+12r})}
{ (q^6;q^6)_n (q^6;q^6)_r  (q^6;q^6)_j (-1;q^6)_{j+1} 
(q^6;q^6)_{n+2j+2r}} \nonumber \\  
 = (-q;q^2)_{\infty} .
\label{ATNSCap}
\end{gather}
Note the extremely simple product on the right hand side of 
(\ref{ATNSCap}), which is the generating function for partitions
into distinct odd parts.

The analogous identity relative to (\ref{Cap1}) is
\begin{gather}\label{ATNSCap1}
\sum_{n=0}^\infty \sum_{j=0}^{2n} 
  \frac{q^{n^2} (-q;q^2)_n \legendre{n-j+1}{3}}{(q^2;q^2)_{2n-j} (q^2;q^2)_{j}}
  = \frac{(q^6;q^{12})_{\infty}}{(q^3,q^9 ; q^{12})_\infty}.
\end{gather}

Of course, (\ref{aATNSCap}), (\ref{ATNSCap}), and (\ref{ATNSCap1}) could easily be embedded
in infinite families of identities via the Bailey chain.

\section{Conclusion}
While we now have in hand two series representations for the
infinite product
${(q^2,q^3,q^9,q^{10}; q^{12})_\infty}^{-1},$
namely the left hand sides of~(\ref{Cap1}) and~(\ref{Cap2}), it is
not clear exactly how the partitions $C_1 (n) $ are generated
by them.  Such an explanation would be most welcome.

Also, it should be noted that this infinite product
${(q^2,q^3,q^9,q^{10}; q^{12})_\infty}^{-1}$ has appeared
in the literature in at least two other combinatorial contexts
beside Capparelli's conjecture: see Andrews' \emph{Memoir} on
generalized Frobenius partitions~\cite[p. 10, equation (5.9)]{geamem},
and Propp's paper on generalized Ferrers diagrams~\cite[p. 113, Thm. 4(a)]{jp},
although in both of these cases the product contained the additional
factor $(q)_\infty^{-1}$.
It would be interesting to see a direct connection between
Capparelli's $C_1(n)$ partitions and the combinatorial
constructs of Andrews and Propp.

\section*{Acknowledgement}
I thank George Andrews for suggesting that I look at this problem, and for his encouragement during the project.

\end{document}